\newcommand{\R}{\mathbb{R}}
\newtheorem{theorem}{Theorem}[section]
\newtheorem{lemma}[theorem]{Lemma}
\newtheorem{definition}{Definition}[section]
\title{Smoothing theorems for Radon transforms over hypersurfaces and related operators}
\author{Michael Greenblatt}
\date{\today}
\begin{document}
\maketitle
\begin{abstract} 
We extend the theorems of [G1] on $L^p$ to $L^p_s$ Sobolev improvement for translation invariant Radon and fractional singular Radon
transforms over hypersurfaces, proving $L^p$ to $L^q_s$ boundedness results for such operators. Here $q \geq p$ but $s$ can 
be positive, negative, or zero. For many such operators we will have a triangle $Z \subset (0,1) \times (0,1) 
\times {\mathbb R}$ such that one has $L^p$ to $L^q_{s}$ boundedness for 
$({1 \over p}, {1 \over q}, s)$ beneath $Z$, and in the case of Radon transforms one does not have  $L^p$ to $L^q_{s}$ boundedness for $({1 \over p}, {1 \over q}, s)$ above the plane containing $Z$, thereby providing a Sobolev space improvement result which is sharp up to endpoints for $({1 \over p}, {1 \over q})$ below $Z$. This triangle $Z$ intersects  the plane $\{(x_1,x_2,x_3): x_3 = 0\}$, and
therefore we also have an $L^p$ to $L^q$ improvement result that is also sharp up to endpoints for certain ranges of $p$ and $q$.
 
\end{abstract}

\section{ Introduction and theorem statements } 

As in [G1], we consider convolution operators with hypersurface measures on $\R^{n+1}$. Namely, we consider operators 
of the following form, where $\bf{x}$ denotes $(x_1,...,x_n)$ and $\bf{t}$ denotes $(t_1,...,t_n)$.
$$Tf({\bf x}, x_{n+1}) = \int_{\R^n} f({\bf x} - {\bf t}, x_{n+1} - S({\bf t})) K({\bf t})\,d{\bf t}\eqno (1.1)$$
Here $S({\bf t})$ is a real-analytic function on a neighborhood $U$ of the origin and $K({\bf t})$ is a function, supported in $U$, that
is $C^1$ on $\{{\bf t} \in U: t_i \neq 0$ for all $i\}$ and which satisfies the following estimates. Write ${\bf t} = ({\bf t}_1,...,{\bf t}_m)$,
where ${\bf t}_i$ denotes  $(t_{i1},...,t_{il_i})$ such that the various $t_{ij}$ variables comprise the whole list $t_1,...,t_n$. Then for some
$0 \leq \alpha_i < l_i$ and some $C > 0$ we assume the following.
$$|K({\bf t})| \leq C\prod_{k=1}^m |{\bf t}_k|^{-\alpha_k} \eqno (1.2a) $$
$$|\partial_{t_{ij}}K({\bf t})| \leq C {1 \over |t_{ij}|} \prod_{k=1}^m |{\bf t}_k|^{-\alpha_k} {\hskip 0.4 in} {\rm \,\,for\,\,all\,\,} i {\rm\,\, and\,\,} j \eqno (1.2b) $$
Operators satisfying $(1.1), (1.2a), (1.2b)$ are sometimes referred to as fractional Radon transforms or fractional singular 
Radon transforms. The case where each $\alpha_i = 0$ includes traditional Radon transform operators, by which we mean the operators where $K({\bf t})$ is
a $C^1$ function. By the translation and rotation
invariance properties of convolution operators, without loss of generality we may assume that
$$S(0,...,0) = 0 {\hskip 0.65 in} \nabla S(0,...,0) = (0,...,0) \eqno (1.3)$$
To avoid trivialities, we also assume $S$ is not identically zero. 

\noindent We will make use the following terminology and results from [G1].

\begin{definition}

Let $f({\bf t})$ be a real analytic function defined on a neighborhood of the origin in 
$\R^n$, and
let $f({\bf t}) = \sum_{\alpha} f_{\alpha}{\bf t}^{\alpha}$ denote the Taylor expansion of $f({\bf t})$ at the origin.
For any $\alpha$ for which $f_{\alpha} \neq 0$, let $Q_{\alpha}$ be the octant $\{{\bf t} \in \R^n: 
t_i \geq \alpha_i$ for all $i\}$. Then the {\it Newton polyhedron} $N(f)$ of $f({\bf t})$ is defined to be 
the convex hull of all $Q_{\alpha}$.  

\end{definition}

\begin{definition} 

Where $f({\bf t})$ is as in Definition 1.1, define $f^*({\bf t})$ by 
$$f^*({\bf t}) = \sum_{(v_1,...,v_n)\,\,a \,\,vertex \,\,of\,\,N(f)} |t_1|^{v_1}...|t_n|^{v_n} \eqno (1.4)$$
\end{definition}

\noindent By Lemma 2.1 of [G2], there is a neighborhood $V$ of the origin and 
a constant $C$ such that for all ${\bf t} \in V$ one has $|f({\bf t})| \leq Cf^*({\bf t})$. 

Let $d\mu$ denote the measure $\prod_{k=1}^m |{\bf t}_k|^{-\alpha_k}\,dm$, where $m$ denotes Lebesgue measure.
By Lemma 2.1 of [G1] there is an $r_0 > 0$, an $a_0> 0$, and an integer $d_0$ satisfying $0 \leq d_0 \leq n-1$, such that if $r < r_0$ then there are positive constants $b_r$ and $B_r$ such that for $0 < \epsilon < {1 \over 2}$ we have
$$b_r \epsilon^{a_0} |\ln \epsilon|^{d_0} < \mu(\{{\bf t} \in (0,r)^n:  S^*({\bf t}) < \epsilon\}) < B_r \epsilon^{a_0} |\ln\epsilon|^{d_0}\eqno (1.5)$$
In order to state the main theorem of [G1], we will also need the following definitions.

\begin{definition}Suppose $F$ is a compact face of the Newton polyhedron $N(f)$. Then
if $f({\bf t}) = \sum_{\alpha} f_{\alpha}{\bf t}^{\alpha}$ denotes the Taylor expansion of $f$ like above, 
define $f_F({\bf t}) = \sum_{\alpha \in F} f_{\alpha}{\bf t}^{\alpha}$.

\end{definition}

\begin{definition} For $f({\bf t})$ as above, we denote by $o(f)$ the maximum order of any zero of any $f_F({\bf t})$ on $(\R - \{0\})^n$. We take
$o(f) = 0$ if there are no such zeroes.

\end{definition}

\begin{definition} The Newton distance $d(f)$ is defined to be the minimal $t$ for which $(t,...,t)$ is in the Newton
polyhedron $N(f)$.

\end{definition}

\noindent The main theorem of [G1] is as follows.
\begin{theorem} Suppose $S({\bf t})$ is a real analytic function on a neighborhood of the origin satisfying $(1.3)$. Let $g = 
 \min(a_0, l_1 - \alpha_1,...,l_m - \alpha_m)$, where the $\alpha_i$ and $l_i$ are as in the beginning of this paper and $a_0$ is as in $(1.5)$. Then there is a neighborhood
 $V$ of the origin such that if $K({\bf t})$ is supported on $V$ and satisfies $(1.2a)-(1.2b)$ then the following hold.
 
 \noindent {\bf 1)} Let $A$ denote the open triangle with vertices $({1 \over 2}, {1 \over \max(o(S), 2)})$, $(0,0)$, and $(1,0)$, and  let $B = \{(x,y) \in A: 
 y < g\}$. Then $T$ is bounded from $L^p(\R^{n+1})$ to $L^p_{s}(\R^{n+1})$ if $({1 \over p}, s) \in B$. 
 
 \noindent {\bf 2)} Suppose $g < 1$, $K({\bf t})$ is nonnegative, and there exists a positive constant $C_0$ and a neighborhood $N_0$ of the origin such that $K({\bf t}) > C_0\prod_{k=1}^m |{\bf t}_k|^{-\alpha_k}$ on $\{{\bf t} \in N_0: t_i \neq 0$ for all $i\}$. Then if $1 < p < \infty$ and 
 $T$ is bounded from $L^p(\R^{n+1})$ to $L^p_{s}(\R^{n+1})$ we must have $s \leq g$.
\end{theorem}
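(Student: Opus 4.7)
The plan is to prove part (1) via Fourier multiplier analysis combined with complex interpolation, and part (2) by constructing explicit test functions saturating each constraint in $g$. Since $T$ is translation invariant, it acts as a Fourier multiplier with symbol
$$m(\xi, \xi_{n+1}) = \int_{\R^n} e^{-i(\xi \cdot {\bf t} + \xi_{n+1} S({\bf t}))} K({\bf t}) \, d{\bf t},$$
and the $L^2 \to L^2_s$ bound is equivalent to the pointwise estimate $(1+|\xi_{n+1}|)^s |m(\xi,\xi_{n+1})| \lesssim 1$.

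The central step is the $L^2 \to L^2_{s_0 - \varepsilon}$ estimate at the peak vertex, with $s_0 = 1/\max(o(S),2)$. For $|\xi_{n+1}| \gg |\xi|$ the phase is dominated by $\xi_{n+1} S({\bf t})$, and I would establish the decay $|m| \lesssim |\xi_{n+1}|^{-s_0+\varepsilon}$ by dyadically decomposing the integration region into pieces on which one face polynomial $S_F$ of the Newton polyhedron dominates; on each such piece, a change of variables adapted to $F$ followed by a van der Corput / stationary phase estimate gives decay whose quality is governed by the order of the zeros of $S_F$ on $(\R\setminus\{0\})^n$. Taking the worst face produces the factor $o(S)$; the convention $\max(o(S),2)$ reflects the floor $1/2$ coming from any single non-degenerate direction. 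For $|\xi| \gtrsim |\xi_{n+1}|$, integration by parts in ${\bf t}$, handling the mild singularities of $K$ via $(1.2b)$, provides at least as good decay.

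The other two vertices $(0,0)$ and $(1,0)$ correspond to trivial $L^\infty \to L^\infty$ and $L^1 \to L^1$ boundedness (with $s=0$), valid because $\alpha_i < l_i$ makes $K$ locally integrable. Complex interpolation then fills in the open triangle $A$. The further restriction to $B = A \cap \{s < g\}$ enters at the interpolation: the factor $|\xi_{n+1}|^{-a_0}|\log|\xi_{n+1}||^{d_0}$ arising from $(1.5)$ forces $s < a_0$, while differentiating against the singular factor $|{\bf t}_i|^{-\alpha_i}$ in $K$ ceases to be integrable once $s \geq l_i - \alpha_i$. For part (2), for each violated constraint I would produce a family of test functions witnessing unboundedness. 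To show $s \leq a_0$, take $f$ essentially a delta in $x_{n+1}$ supported over a small ball in ${\bf x}$; then $Tf$ has a singularity whose profile near $x_{n+1}=0$ is dictated from below by the lower bound in $(1.5)$, and comparing $L^p$ norms forces $s \leq a_0$. To show $s \leq l_i - \alpha_i$, reduce to the observation that convolution with $\prod |{\bf t}_k|^{-\alpha_k}$ alone behaves like a fractional integration of order exactly $l_i - \alpha_i$ in the ${\bf t}_i$ block and cannot smooth beyond that; positivity of $K$ together with its lower bound ensures no cancellation destroys this singular structure.

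The main obstacle is the oscillatory integral decay bound at the peak vertex, and in particular the clean appearance of $o(S)$ as the governing quantity. Securing the exponent $1/\max(o(S),2)$ uniformly in $(\xi, \xi_{n+1})$ will require a careful Newton polyhedron decomposition, plausibly via a resolution of singularities of Varchenko type reducing to the monomial case, together with bookkeeping of logarithmic losses from tangent faces. Once that bound is in hand, the trivial endpoints, the interpolation, and the test function constructions for necessity are all comparatively routine.
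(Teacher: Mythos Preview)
This paper does not prove Theorem 1.1; it is quoted as the main result of the earlier paper [G1], and only Theorems 1.2--1.5 are argued here. So there is no proof in the present paper to compare your proposal against.

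Your outline nonetheless has a structural inconsistency worth flagging. You propose to establish the peak estimate $L^2 \to L^2_{s_0 - \varepsilon}$ with $s_0 = 1/\max(o(S),2)$ and then interpolate with the trivial $L^1$ and $L^\infty$ endpoints to fill the open triangle $A$, with the cutoff $s < g$ somehow appearing ``at the interpolation.'' But if that $L^2$ bound held for $T$ itself, complex interpolation with the trivial endpoints would already yield all of $A$, and nothing downstream could reintroduce the restriction to $B$. In fact when $g < 1/\max(o(S),2)$, part (2) of the very theorem you are proving forbids $L^2 \to L^2_s$ for any $s > g$, so your ``central step'' is false in that regime. The discussion immediately following Theorem 1.1 in the paper confirms this: the $L^2$ Sobolev exponent of $T$ is $g$, not $1/\max(o(S),2)$. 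The way $o(S)$ actually enters is through a dyadic decomposition $T = \sum_j T_j$: each piece $T_j$ enjoys an $L^2 \to L^2$ bound with decay rate governed by $1/\max(o(S),2)$, while the $L^1$ and $L^\infty$ bounds on $T_j$ carry factors whose summability in $j$ is what imposes $s < g$. Interpolating piece by piece and then summing is what produces $B = A \cap \{y < g\}$; your scheme of a single global $L^2$ estimate followed by interpolation cannot generate this shape.
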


Observe that when $g < {1 \over \max(o(S), 2)}$, the two parts of Theorem 1.1 combined say that for ${1 \over p} \in ( {\max(o(S), 2) \over 2}g, 1 -  {\max(o(S), 2) \over 2}g)$, the amount of $L^p$ 
Sobolev smoothing given by part 1, $g$ derivatives, is optimal except possibly missing the endpoint $s = g$. When $g = {1 \over \max(o(S), 2)}$ the same is true for $p = 2$. 

Some motivation for the index $g$ in Theorem 1.1 is as follows. Let $\nu$ denote the surface measure of $S$, weighted by $\prod_{k=1}^m |{\bf x}_k|^{-\alpha_k}$. If we are in a situation where $a_0 \leq {1 \over \max(o(S), 2)}$, the Newton polyhedron of $S(x)$ controls the decay 
rate of the Fourier transform $\hat{\nu}
(\lambda)$ in the $(0,...,0,1)$ direction, and in this direction the decay rate has a bound of $C|\lambda|^{-a_0 + \epsilon}$ for any $\epsilon > 0$.
This can be shown a minor variation on the arguments of [V] or [G2].
On the other hand, in any $(0,...,0,1,0,...,0)$ direction, a straightforward calculation shows that $\hat{\nu}(\lambda)$ decays at the rate of
$|\lambda|^{-{(l_i - \alpha_i)}}$, where $\alpha_i$ is such that this direction is one of the $t_{ij}$ directions. 

It can then be shown that in any other 
``diagonal'' direction, the Fourier transform decays at a rate no worse than the minimum of the above decay rates. Consequently, since $g = \min(a_0, l_1 - \alpha_1,...,l_m - \alpha_m)$, $g$ is the slowest possible decay rate of $\hat{\nu}$ in any direction. Given that
the $L^2$ Sobolev space improvement for $T$ is the largest exponent $\delta$ for which one has $|\hat{\nu}(\lambda)| \leq C|\lambda|^{-\delta}$, 
the $L^2$
case of Theorem 1.1 says that if $g < {1 \over \max(o(S), 2)}$, then up to endpoints one has such an estimate with $\delta= g$. In other words, the 
directional Fourier transform decay rates hold with a constant that is uniform over all directions. Furthermore, the statement of Theorem 1.1 gives that 
one has the same level of $L^p$ Sobolev improvement for $p$ in an interval containing $2$.

The following is the Sobolev space estimate we will use in our interpolation with the boundedness results of Theorem 1.1. We will be using it for $p$ approaching
$1$ and for $q$ tending to infinity.

\begin{theorem} For any $1 < p < q < \infty$ and any $\gamma > 1 + \sum_{i = 1}^m \alpha_i$,
the operator $T$ is bounded from $L^p(\R^{n+1})$ to $L^q_{-\gamma}(\R^{n+1})$.

\end{theorem}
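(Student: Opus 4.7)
The plan is to realize $Tf = \mu * f$ for the hypersurface measure $d\mu(\mathbf{x}, x_{n+1}) = K(\mathbf{x})\,\delta(x_{n+1} - S(\mathbf{x}))\,d\mathbf{x}$, and then to rephrase the desired bound as a convolution estimate for the smoothed measure $G := G_\gamma * \mu$, where $G_\gamma$ denotes the Bessel potential kernel of order $\gamma$ on $\R^{n+1}$. Since $T$ is a convolution, $(1-\Delta)^{-\gamma/2} T f = G * f$, and Young's inequality gives $\|G * f\|_q \leq \|G\|_r \|f\|_p$ whenever $1/r = 1 - (1/p - 1/q)$. As $(p,q)$ ranges over $1 < p < q < \infty$ the needed $r$ ranges over all of $(1,\infty)$, so it suffices to show $G \in L^1 \cap L^\infty(\R^{n+1})$; log-convexity of $L^r$ norms then places $G$ in every $L^r$, $1 \leq r \leq \infty$. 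Without loss of generality I also assume $\gamma < n+1$, since for $\gamma \geq n+1$ the Bessel kernel is itself bounded (and the inclusions $L^q_{-\gamma'} \subset L^q_{-\gamma}$ for $\gamma' < \gamma$ reduce that case to a smaller $\gamma'$ still exceeding $1 + \sum \alpha_k$).

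The $L^1$ bound is immediate from Young: $\mu$ has finite total mass because $\int |K(\mathbf{t})|\,d\mathbf{t} \leq C \int_V \prod_{k=1}^m |\mathbf{t}_k|^{-\alpha_k}\,d\mathbf{t} < \infty$, using $\alpha_k < l_k$ and the compactness of $V$, and $\|G_\gamma\|_1 < \infty$ for any $\gamma > 0$.

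The $L^\infty$ bound is the heart of the argument. Using $|G_\gamma(y)| \leq C|y|^{\gamma - (n+1)}$ for $|y| \leq 1$ together with exponential decay for $|y| \geq 1$, and the trivial inequality $|(\mathbf{x} - \mathbf{t},\, x_{n+1} - S(\mathbf{t}))| \geq |\mathbf{x} - \mathbf{t}|$ (combined with $\gamma - (n+1) < 0$), the problem reduces to showing that
\[ I(\mathbf{x}) := \int_V |\mathbf{x} - \mathbf{t}|^{\gamma - (n+1)} \prod_{k=1}^m |\mathbf{t}_k|^{-\alpha_k}\,d\mathbf{t} \]
is bounded uniformly in $\mathbf{x} \in \R^n$. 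I distribute the negative exponent across the groups of $\mathbf{t}$-variables: choose $\beta_k > 0$ satisfying $\sum_k \beta_k = n+1 - \gamma$ and $\beta_k + \alpha_k < l_k$ for each $k$. Such a choice exists precisely because $\sum_k (l_k - \alpha_k) = n - \sum_k \alpha_k > n+1-\gamma$, which is exactly the hypothesis $\gamma > 1 + \sum_k \alpha_k$. Since $|\mathbf{x} - \mathbf{t}| \geq |\mathbf{x}_k - \mathbf{t}_k|$ and all $\beta_k > 0$,
\[ |\mathbf{x} - \mathbf{t}|^{\gamma - (n+1)} \leq \prod_{k=1}^m |\mathbf{x}_k - \mathbf{t}_k|^{-\beta_k}, \]
and Fubini factors $I(\mathbf{x})$ as a product of group integrals $\int |\mathbf{x}_k - \mathbf{t}_k|^{-\beta_k} |\mathbf{t}_k|^{-\alpha_k}\,d\mathbf{t}_k$ on $\R^{l_k}$. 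The condition $\beta_k + \alpha_k < l_k$ makes each of these uniformly bounded in $\mathbf{x}_k$ by a routine rescaling argument that splits the $\mathbf{t}_k$-domain according to whether $|\mathbf{t}_k|$ is larger or smaller than $|\mathbf{x}_k|$.

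The main technical obstacle is precisely this uniform boundedness at the group level; the hypothesis $\gamma > 1 + \sum \alpha_k$ is used in an essentially sharp way in the selection of the $\beta_k$. Everything else is Young's convolution inequality, the translation invariance of $T$, and log-convexity of $L^r$ norms, each of which applies immediately.
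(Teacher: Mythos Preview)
Your proposal is correct and follows essentially the same route as the paper: write $(I-\Delta)^{-\gamma/2}T$ as convolution with $G_\gamma * \mu$, invoke Young once the kernel is shown to lie in every $L^r$, and obtain the $L^\infty$ bound by using the Bessel kernel estimate $|G_\gamma(y)|\le C|y|^{\gamma-(n+1)}$, dropping the $x_{n+1}-S(\mathbf t)$ coordinate via monotonicity, splitting the exponent as $n+1-\gamma=\sum_k\beta_k$ with $\alpha_k+\beta_k<l_k$, and factoring into group integrals. The only cosmetic differences are that the paper packages the $L^1$ and $L^\infty$ bounds into a single pointwise estimate $|h_\gamma(x)|\le C_\gamma e^{-|x|}$ and bounds the group integrals via the Fourier side rather than your direct splitting, but the substance is identical.
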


Let $k =  1 + \sum_{i = 1}^m \alpha_i$. Observe that the plane $P$ in 3-space containing the line $\{(x,y,z): x = y, z = g\}$ and the point $(1,0, -k)$ has equation
$(g + k)(x - y) + z = g$.  Thus letting $p$ approach $1$ and $q$ approach infinity in Theorem 1.2 and interpolating with Theorem 1.1 gives the following,
keeping in mind that if $s_1 < s_2$ then $L^q_{s_2}(\R^{n+1}) \subset L^q_{s_1}(\R^{n+1})$ continuously for any $1 < q < \infty$. 

\begin{theorem} 

\
\

There is a neighborhood
 $V$ of the origin such that if $K({\bf t})$ is supported on $V$ and satisfies $(1.2a)-(1.2b)$ then the following hold.
 
Suppose $g < {1 \over \max(o(S),2)}$. Let $P$ denote the plane with equation $(g + k)(x - y) + z = g$, and let 
$Z$ be the closed triangle in $P$ whose vertices are $({\max(o(S), 2) \over 2}g, {\max(o(S), 2) \over 2}g, g), (1 - {\max(o(S), 2) \over 2}g, 1 - {\max(o(S), 2) \over 2}g, g)$, and $(1,0, - k)$. Then if $({1 \over p}, {1 \over q}, s)$ is such that there is a $t > s$ with $({1 \over p}, {1 \over q}, t)$ in the 
interior of $Z$, then
$T$ is bounded from $L^p(\R^{n+1})$ to $L^q_s(\R^{n+1})$. 

Suppose $g \geq {1 \over \max(o(S),2)}$. Let $L$ denote the open line segment joining $({1 \over 2}, {1 \over 2}, {1 \over \max(o(S),2)})$ with the point 
$(1,0,-k)$. Then if $({1 \over p}, {1 \over q}, s)$ is such that there is a $t > s$ with $({1 \over p}, {1 \over q}, t) \in L$, then
$T$ is bounded from $L^p(\R^{n+1})$ to $L^q_s(\R^{n+1})$. 

\end{theorem}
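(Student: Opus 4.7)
The plan is to apply complex interpolation for the Bessel-potential Sobolev spaces among the endpoint boundedness already furnished by Theorems~1.1 and~1.2 at points approximating the vertices of $Z$, and then to use the continuous inclusion $L^q_t \hookrightarrow L^q_s$ for $s < t$ to absorb the strict inequalities present in those two theorems. Set $M = \max(o(S), 2)$.

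In the case $g < 1/M$, for each sufficiently small $\epsilon > 0$, Theorem~1.1(1) supplies two diagonal endpoints $T : L^{p_i^\epsilon} \to L^{p_i^\epsilon}_{g - \epsilon}$ for $i = 0, 1$ with $1/p_0^\epsilon = \tfrac{M}{2} g + \epsilon$ and $1/p_1^\epsilon = 1 - \tfrac{M}{2} g - \epsilon$; these correspond to $3$D points just below the two ``top'' vertices of $Z$ on the diagonal $\{x = y\}$. Theorem~1.2, applied with $p_2^\epsilon$ near $1$, $q_2^\epsilon$ near $\infty$, and $\gamma$ near $k$, gives a third endpoint $T : L^{p_2^\epsilon} \to L^{q_2^\epsilon}_{-k - \epsilon}$, corresponding to a $3$D point near the remaining vertex $(1,0,-k)$. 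Standard complex interpolation between Bessel-potential spaces gives that if $T : L^{p_i} \to L^{q_i}_{s_i}$ boundedly for $i = 0, 1$, then $T : L^p \to L^q_s$ boundedly at every convex combination $(1/p, 1/q, s) = (1-\theta)(1/p_0, 1/q_0, s_0) + \theta (1/p_1, 1/q_1, s_1)$ with $\theta \in (0, 1)$. Iterating this pairwise among the three endpoint families yields boundedness at every point of the open triangle with vertices at the three endpoints, a region which approaches the interior of $Z$ from below as $\epsilon \downarrow 0$. Consequently, for any $(1/p, 1/q, t)$ in the interior of $Z$ and any $s < t$ one may fix $\epsilon$ small enough that some $(1/p, 1/q, s')$ with $s < s' < t$ lies in the interpolated open triangle; the continuous inclusion $L^q_{s'} \hookrightarrow L^q_s$ then delivers the desired bound.

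The case $g \geq 1/M$ is entirely analogous but degenerate: Theorem~1.1(1) now supplies only a single endpoint family near $(1/2, 1/2, 1/M - \epsilon)$, since $g$ lies above the apex of the triangle $A$. Pairwise interpolation with the Theorem~1.2 endpoint fills an open line segment approaching $L$, and the Sobolev inclusion completes the argument along that segment exactly as before. The main technical point to be careful about is invoking the complex-interpolation identity $[L^{p_0}_{s_0}, L^{p_1}_{s_1}]_\theta = L^{p_\theta}_{s_\theta}$ for the inhomogeneous Bessel-potential spaces on $\R^{n+1}$, which is classical but should be cited explicitly; a mild secondary subtlety is verifying that the three-vertex interpolation in the nondegenerate case, implemented by iterating two pairwise interpolations, does fill the entire open triangle rather than only a proper subset.
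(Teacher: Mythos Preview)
Your proposal is correct and matches the paper's own argument essentially point for point: the paper derives Theorem~1.3 by interpolating the diagonal $L^p \to L^p_s$ bounds of Theorem~1.1 with the $L^p \to L^q_{-\gamma}$ bounds of Theorem~1.2 as $p \to 1$, $q \to \infty$, $\gamma \to k$, and then invokes the continuous inclusion $L^q_{s_2} \subset L^q_{s_1}$ for $s_1 < s_2$. Your write-up is in fact more explicit than the paper's, which states the interpolation in a single sentence preceding the theorem; your attention to the complex-interpolation identity for Bessel-potential spaces and to the iterated pairwise interpolation filling the open triangle are reasonable points of care that the paper leaves implicit.
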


The triangle $Z$ can be visualized as follows. The segment from $({\max(o(S), 2) \over 2}g, {\max(o(S), 2) \over 2}g, g)$ to 
$(1 - {\max(o(S), 2) \over 2}g, 1 - {\max(o(S), 2) \over 2}g, g)$ is a line segment above the line $y = x$, at fixed height $z = g$, which is symmetric
about the midpoint $(1/2,1/2,g)$. The trangle $Z$ is then the convex hull of this segment and the point $(1,0,-k)$ that is below the lower-rightmost 
point in the square $[0,1] \times [0,1]$.

We can interpolate Theorem 1.3 with the trivial $L^p$ to $L^p$ estimates for $1 < p < \infty$ to obtain a larger region of Sobolev space boundedness. This
can be described as follows.

\begin{theorem} 

\
\

There is a neighborhood
 $V$ of the origin such that if $K({\bf t})$ is supported on $V$ and satisfies $(1.2a)-(1.2b)$ then the following hold.
 
Suppose $g < {1 \over \max(o(S),2)}$. Let $Z_1$ be the closed triangle with vertices $(0,0,0)$, $(1,0, - k)$, and $({\max(o(S), 2) \over 2}g, {\max(o(S), 2) \over 2}g, g)$, and let $Z_2$ be the closed triangle
with vertices $(1,1,0)$, $(1,0,-k)$, and $(1 - {\max(o(S), 2) \over 2}g, 1 - {\max(o(S), 2) \over 2}g, g)$. If $({1 \over p}, {1 \over q}, s)$ is such that there is a $t > s$ with $({1 \over p}, {1 \over q}, t)$ in the interior of $Z \cup Z_1 \cup Z_2$, then $T$ is bounded from $L^p(\R^{n+1})$ to $L^q_s(\R^{n+1})$. 

Suppose $g \geq {1 \over \max(o(S),2)}$. Let $Z_3$ be the closed triangle with vertices $(0,0,0)$, $(1,0,-k)$, and $({1 \over 2}, {1 \over 2}, {1 \over \max(o(S),2)})$ and let $Z_4$  be the closed triangle with vertices $(1,1,0)$, $(1,0,-k)$, and $({1 \over 2}, {1 \over 2}, {1 \over \max(o(S),2)})$. If $({1 \over p}, {1 \over q}, s)$ is such that there is a $t > s$ with $({1 \over p}, {1 \over q}, t)$ in the interior of $Z_3 \cup Z_4$, then $T$ is bounded from $L^p(\R^{n+1})$ to $L^q_s(\R^{n+1})$. 

\end{theorem}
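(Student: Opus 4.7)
The plan is a routine interpolation argument. The set of triples $(1/p,1/q,s)$ for which $T:L^p\to L^q_s$ is bounded is convex in $\R^3$, by Stein's complex interpolation theorem applied to the analytic family $z\mapsto (I-\Delta)^{z/2}T$; we combine the bounds of Theorem 1.3 with the trivial $L^r\to L^r$ bounds of $T$ to realize the regions of Theorem 1.4 as subsets of this convex set.

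For the trivial input: since $K$ has compact support and $\alpha_i<l_i$ for each $i$, the kernel $K(\mathbf{t})\prod_k|\mathbf{t}_k|^{-\alpha_k}$ is integrable on $U$, so $T$ is convolution with a finite Borel measure on $\R^{n+1}$. Young's inequality gives $T:L^r\to L^r$ for every $1\le r\le\infty$; combined with the continuous inclusion $L^r\hookrightarrow L^r_s$ for $s\le 0$, this yields $L^p\to L^q_s$ boundedness at every triple $(1/r,1/r,s)$ with $r\in[1,\infty]$ and $s\le 0$.

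It now suffices to check that the region described in each case of Theorem 1.4 is contained in the convex hull of the Theorem 1.3 region together with the diagonal strip $\{(r,r,s):r\in[0,1],\ s\le 0\}$. For the first case, $Z_1$ has one vertex on the diagonal, namely $(0,0,0)$, with its other two vertices among those of $Z$; similarly $Z_2$ has the vertex $(1,1,0)$ on the diagonal and its other two vertices among those of $Z$. Hence $Z\cup Z_1\cup Z_2\subset\mathrm{conv}(Z\cup\{(r,r,0):r\in[0,1]\})$. For the second case, $Z_3$ and $Z_4$ are convex combinations of the line segment $L$ with the respective diagonal vertices $(0,0,0)$ and $(1,1,0)$, so $Z_3\cup Z_4\subset\mathrm{conv}(L\cup\{(r,r,0):r\in[0,1]\})$.

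Finally, for any $(1/p,1/q,s)$ satisfying the hypothesis, choose $t>s$ with $(1/p,1/q,t)$ in the interior of the relevant region. The convex-hull inclusion, together with an openness argument that exploits the ``there is $t'>s'$'' flexibility in Theorem 1.3 and the openness of $(0,1)$ in $r$, allows us to realize $(1/p,1/q,t)$ as a convex combination of strict-interior points of the two input regions. Interpolation then yields $T:L^p\to L^q_t$, and the continuous inclusion $L^q_t\hookrightarrow L^q_s$ gives the desired $T:L^p\to L^q_s$. The main (and rather minor) obstacle is this openness verification, especially in the second case where the Theorem 1.3 region is only one-dimensional and so the two-dimensional triangles $Z_3,Z_4$ genuinely arise from the interpolation rather than from a prior three-dimensional region of bounds; once one notes that the diagonal $\{(r,r,0)\}$ and the segment $L$ are skew in $\R^3$, the needed decomposition reduces to the same elementary convex combination argument.
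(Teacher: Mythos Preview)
Your argument is correct and follows exactly the approach the paper indicates: interpolate the region from Theorem 1.3 with the trivial $L^r\to L^r$ bounds along the diagonal $\{(r,r,0)\}$, then use the inclusion $L^q_t\hookrightarrow L^q_s$ for $t>s$. One small slip: the convolution kernel is $K(\mathbf t)$ itself, not $K(\mathbf t)\prod_k|\mathbf t_k|^{-\alpha_k}$; the hypothesis $(1.2a)$ already bounds $|K|$ by the product, which together with $\alpha_i<l_i$ gives $K\in L^1$ and hence the claimed $L^r\to L^r$ boundedness.
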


So in Theorem 1.4, $Z_1$ is the convex hull of the left side of $Z$ with $(0,0,0)$ and $Z_2$ is  the convex hull of the right side of $Z$ with $(1,1,0)$, so 
that $Z_1$ and $Z_2$ are symmetric about the plane $x + y = 1$. Similarly, $Z_3$ is the convex hull of $(0,0,0)$ and the line segment from $({1 \over 2}, {1 \over 2}, {1 \over \max(o(S),2)})$ to $(1,0,-k)$ on the plane $x + y = 1$, and $Z_4$ is the convex hull of $(1,1,0)$ and the line segment from 
$({1 \over 2}, {1 \over 2}, {1 \over \max(o(S),2)})$ to $(1,0,-k)$. Again, $Z_3$ and $Z_4$ are symmetric about the plane 
$x + y = 1$, this time with a common edge on this plane.

The following theorem tells us when Theorem 1.3 gives the best possible amount of Sobolev smoothing, up to endpoints. Since Theorem 1.1 can only be
sharp up to endpoints in situations where $g \leq {1 \over \max(o(S),2)}$ this is the only situation when we can hope for such a result. In the following
theorem we will see that if each $\alpha_i  = 0$, such as in the case of (nonsingular) Radon transforms, if $g \leq {1 \over \max(o(S),2)}$  one never gets a
$({1 \over p}, {1 \over q}, s)$ boundedness theorem above the plane $P$ for any $1 <p,q < \infty$ (including when $p > q$.) Thus if $g < {1 \over \max(o(S),2)}$, Theorem 1.3 gives the optimal $s$ up to endpoints for $({1 \over p}, {1 \over q})$ beneath the triangle $Z$, and when $g = {1 \over \max(o(S),2)}$ Theorem 1.3 gives the optimal  $s$ 
up to endpoints for $({1 \over p}, {1 \over q})$ beneath the open line segment joining $(1,0, - k)$ and $({1 \over 2}, {1 \over 2}, {1 \over \max(o(S),2)})$.

\begin{theorem}

Suppose $g \leq {1 \over \max(o(S),2)}$ and $\alpha_i = 0$ for all $i$, such as in the case of (nonsingular) Radon transforms.  Suppose further that there is a $C_1 > 0$ and a 
neighborhood $N_0$ of the origin such that $K({\bf t}) > C_1$ on $N_0$. Then for any $1 < p, q < \infty$, 
if $({1 \over p}, {1 \over q}, s)$ is 
such that there is a $t < s$ with $({1 \over p}, {1 \over q}, t)$ on the plane $P$, then $T$ is not bounded from $L^p(\R^{n+1})$ to $L^q_s(\R^{n+1})$.

\end{theorem}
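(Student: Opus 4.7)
My plan is to prove failure of boundedness by a Knapp-type oscillatory test function adapted to the Newton polyhedron of $S$. Fix $(1/p, 1/q, s)$ with $s > g + (g+k)(1/q - 1/p)$, which is precisely the condition that some $t < s$ lies on $P$; I will build a family $\{f_\lambda\}_{\lambda > 0}$ for which the ratio $\|Tf_\lambda\|_{L^q_s}/\|f_\lambda\|_{L^p}$ tends to infinity as $\lambda \to \infty$. Two preliminary observations simplify the setup. First, with $\alpha_i = 0$ we have $k = 1$, and the bound $g \le 1/\max(o(S), 2) \le 1/2 < 1 \le l_i$ forces the minimum in $g = \min(a_0, l_1, \ldots, l_m)$ to be attained by $a_0$, so $g = a_0$. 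Second, the same hypothesis, combined with a refinement of the arguments of [V] or [G2], yields a two-sided Newton estimate $|\hat\nu(\lambda e_{n+1})| \sim \lambda^{-a_0}$ up to logarithmic factors; from this analysis I extract a supporting hyperplane $\{c \cdot {\bf x} = 1\}$ of $N(S)$ with $c \in \R_{>0}^n$, $|c| := c_1 + \cdots + c_n = a_0$, and an associated face polynomial $P(\sigma)$ with $P(0) = 0$.

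For the test function, take
\[
f_\lambda({\bf x}, x_{n+1}) := V_\lambda\, e^{-i\lambda x_{n+1}}\, \tilde\phi(\lambda^{c_1} x_1, \ldots, \lambda^{c_n} x_n,\ \lambda x_{n+1})
\]
with $V_\lambda := \lambda^{1+a_0}$ and $\tilde\phi$ a fixed smooth nonnegative bump supported on a small cube centered at the origin. A direct calculation gives $\|f_\lambda\|_{L^p} \sim V_\lambda^{1/p'}$. Substituting into (1.1) and performing the rescaling $t_i = \lambda^{-c_i}\sigma_i$ expresses $Tf_\lambda({\bf y}, y_{n+1})$ as $V_\lambda \lambda^{-a_0} e^{-i\lambda y_{n+1}} G_\lambda(\lambda^{c_1} y_1, \ldots, \lambda^{c_n} y_n, \lambda y_{n+1})$, where $G_\lambda$ converges locally uniformly as $\lambda \to \infty$ to
\[
G(u, u_{n+1}) := K(0) \int e^{iP(\sigma)}\, \tilde\phi(u - \sigma,\ u_{n+1} - P(\sigma))\, d\sigma.
\]
Since $P(0) = 0$, at $(u, u_{n+1}) = (0,0)$ the integrand is supported near $\sigma = 0$, where $e^{iP(\sigma)}$ is close to $1$; combined with $K(0) > 0$ from $K > C_1$, this yields $|G| \gtrsim 1$ on a positive-measure set of $(u, u_{n+1})$. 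Hence $\|Tf_\lambda\|_{L^q} \gtrsim \lambda^{-a_0} V_\lambda^{1/q'}$, and since $Tf_\lambda$ is a modulation by $e^{-i\lambda y_{n+1}}$ of a slowly varying envelope with Fourier support in $|\xi| \sim \lambda$, Littlewood--Paley theory yields $\|Tf_\lambda\|_{L^q_s} \sim \lambda^s \|Tf_\lambda\|_{L^q}$.

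Combining, $T: L^p \to L^q_s$ would force $\lambda^{s - a_0 + (1+a_0)/q'} \lesssim \lambda^{(1+a_0)/p'}$ uniformly in $\lambda$, i.e.\ $s \le a_0 + (1+a_0)(1/q - 1/p) = g + (g+k)(1/q - 1/p)$, contradicting the hypothesis. The main technical obstacle will be justifying the two-sided Newton-polyhedron estimate on $\hat\nu$ and the associated locally uniform convergence $G_\lambda \to G$: controlling the lower-order error terms in the rescaled expression for $S$, together with the polylogarithmic factors in the Newton decay bound, requires care, but the hypothesis $g \le 1/\max(o(S), 2)$ precisely ensures that no accidental higher-order cancellation disrupts the leading-order Newton rate, and any polylogarithmic loss in $\lambda$ is absorbed into the strict inequality $s > t$ on $P$.
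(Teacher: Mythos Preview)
Your approach is essentially the same as the paper's: a Knapp-type example scaled anisotropically according to a supporting hyperplane of $N(S)$ through the diagonal point $(d,\ldots,d)$, with oscillation at frequency $\sim\lambda$ in the last variable so that the $L^q_s$ norm picks up a factor $\lambda^s$. Your scaling $c$ with $|c|=a_0=1/d$ is exactly the paper's vector $b$ after normalizing so that $r^{b_{n+1}}=\lambda^{-1}$, and the final exponent comparison is identical.

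Two technical points where the paper is cleaner and your write-up has small gaps. First, you assert $c\in\R_{>0}^n$, but a supporting hyperplane at $(d,\ldots,d)$ can have normal with some zero components (e.g.\ when the relevant face of $N(S)$ is unbounded); the paper allows $b_i=0$ and simply freezes those coordinates at a small constant scale. Second, your bump $\tilde\phi$ is compactly supported in $x_{n+1}$, so after rescaling by $\lambda$ and modulating by $e^{-i\lambda x_{n+1}}$ the Fourier support in $\xi_{n+1}$ has spread comparable to $\lambda$ and is not contained in $\{|\xi|\sim\lambda\}$; the Littlewood--Paley step $\|Tf_\lambda\|_{L^q_s}\sim\lambda^s\|Tf_\lambda\|_{L^q}$ then needs justification. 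The paper sidesteps this by taking the last-variable factor to be a Schwartz function $\psi_1$ whose Fourier transform is compactly supported in $(1,2)$, so the frequency localization is exact. With that choice the paper also avoids your limiting-model analysis $G_\lambda\to G$ entirely: it simply uses $|S({\bf t})|\le C_0 S^*({\bf t})\lesssim r^{b_{n+1}}$ on the box to conclude that the surface average does not move the $x_{n+1}$-argument of the test function outside the region where it is bounded below, which is more elementary than passing to the face polynomial.
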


\noindent {\bf Extensions.}

Observe that by the translation invariance of $T$, whenever one 
has an $L^p(\R^{n+1})$ to $L^q_s(\R^{n+1})$ boundedness theorem, for any $b \in \R$ one also has the corresponding $L^p_b(\R^{n+1})$ to
 $L^q_{b+s}(\R^{n+1})$ boundedness theorem. 
 
The Sobolev embedding theorem can sometimes be used to extend the range of boundedness in Theorems 1.3 and 1.4 if $k$ is sufficiently close to its maximum 
possible value of $n+1$. Namely, it turns out that if $k > n + 1 - 2g$ in the case where $g < {1 \over \max(o(S),2)}$, or if 
$k > n+1 - {2 \over \max(o(S), 2)}$ in the case when $g \geq {1 \over \max(o(S),2)}$, one can sometimes extend Theorems 1.3 and 1.4 beyond
$Z_1 \cup  Z_2$ or $Z_3 \cup Z_4$ respectively in this fashion. If $k > n + 1 - g$  and  $g < {1 \over \max(o(S),2)}$, then one can also 
sometimes extend beyond the triangle $Z$. The sharpness theorem, Theorem 1.5, will be false in the latter situations.

\section{Some background} 

There has been quite a bit of work done on the boundedness properties of Radon transforms and fractional singular Radon transforms on function spaces, so we focus our attention on 
Sobolev space improvement and $L^p$ to $L^q$ improvement results for Radon transforms over hypersurfaces. For curves in $\R^2$, [S] provides comprehensive $L^p_{\alpha}$ to 
$L^q_{\beta}$ boundedness results for Radon transforms that are sharp up to endpoints. These results include general non-translation invariant operators.

For translation invariant Radon transforms, $L^2$ to $L^2_{\beta}$ Sobolev space improvement is equivalent to a surface measure Fourier transform 
decay rate estimate. When $n = 2$, the stability 
theorems of Karpushkin [Ka1] [Ka2] combined with [V] give such sharp decay rate results, again for the case of (nonsingular) Radon transforms.
 For situations where not all $\alpha_i$ are zero, the author has some results  [G3] [G4] in this area.

For higher dimensional hypersurfaces, in addition to the above-mentioned [G1], it follows from [St] that if the density functions are singular enough
 in the sense that the $\alpha_i$ are close enough to $l_i$, then there will be an interval containing 2 on which sharp $L^p$  to $L^p_{\beta}$ Sobolev smoothing holds. This extends the author's paper [G5]. We also mention the paper [Cu] which deals with $L^p$  to $L^p_{\beta}$ improvement for
fractional singular Radon transforms where the surface is relatively nondegenerate.

For specifically $L^p$ to $L^q$ improvement for Radon transforms over hypersurfaces, there have been a number of other results for 
Radon transforms. 
The situation where the  $S({\bf t})$ is a homogeneous or mixed homogeneous function has been considered in [FGU1] [FGU2] [DZ]. Convex surfaces were
considered in [ISaS]. Also, there have been papers considering weighted Radon transforms, where instead of singular $K({\bf t})$ as in this paper one considers
surfaces damped by a bounded $K({\bf t})$ with zeroes on a set chosen to be natural for the surfaces at hand. We mention [Gr] and [O] as examples of such
results.

\section{Examples}

\noindent {\bf Example 1.}

We consider the case of curves in two dimensions. So $S(t) = ct^l + O(t^{l+1})$ for some nonzero $c$ and some $l \geq 2$. Here $m = 1$, and there
is one $\alpha_k$ in $(1.2a)-(1.2b)$ which we denote by simply $\alpha$, where $0 < \alpha < 1$. Then $S^*(t) =t^l$. The index  $a_0$ of $(1.5)$ is the exponent  of $\epsilon$ in the measure of $\int_0^{\epsilon^{1 \over l}} t^{-\alpha}\,dt$ or $a_0 = {1 - \alpha \over l}$.
The Newton polyhedron $N(S)$ here has the one vertex $l$, and there is one polynomial $S_F(t)$ as in Definition 1.4,
given by $t^l$. Hence $o(S) = 0$ here. Thus the quantity $\max(o(S),2)$ in Theorem 1.1 is just 2, and the upper vertex of $A$ is $(1/2, 1/2)$. The quantity
$g$ of Theorem 1.1 is then given by $\min({1 - \alpha \over l}, 1 - \alpha) = {1 - \alpha \over l}$.

Looking at what Theorem 1.4 says here, we see that $Z_1$ has vertices $(0,0,0)$, $(1,0,-1 - \alpha)$, and $({1 - \alpha \over l},{1 - \alpha \over l},{1 - \alpha \over l})$, and $Z_2$ has vertices $(1,1,0)$, 
$(1,0, -1 - \alpha)$, and $(1 - {1 - \alpha \over l}, 1-{1 - \alpha \over l}, {1 - \alpha \over l})$. The triangle $Z$ has vertices $({1 - \alpha \over l},
{1 - \alpha \over l}, {1 - \alpha \over l})$, $(1- {1 - \alpha \over l}, 1 - {1 - \alpha \over l}, {1 - \alpha \over l})$, and $(1,0, - 1 - \alpha)$. (In the case
where $\alpha = 0$ and $l = 2$ the triangle $Z$ reduces to a line and we are in the second case of Theorem 1.4.) Theorem 1.4 then says that one
has $L^p$ to $L^q_{s}$ boundedness for $({1 \over p}, {1 \over q}, s)$ below the interior of $Z \cup Z_1 \cup Z_2$.

To specify the above to (nonsingular) Radon transforms one inserts $\alpha = 0$ into the above. Specifying further to $L^p$ to $L^q$ estimates, when
$l > 2$ we look at the intersection of $Z$ with the $x$-$y$ plane. Observe that $X_1 = {l \over l + 1} ({1 \over l}, {1 \over l}, {1 \over l}) + { 1 \over l + 1}(1,0,-1)$ and 
$X_2 =  {l \over l + 1} (1 - {1 \over l}, 1 - {1 \over l}, {1 \over l}) + { 1 \over l + 1}(1,0,-1)$ have third coordinate zero. As a result these two points will be 
on the intersection of $Z$ with the $x$-$y$ plane. Note that $X_1 = ({2 \over l + 1}, {1 \over l + 1}, 0)$ and $X_2 = ({l \over l + 1}, {l - 1 \over l + 1},0)$.
Thus we have $L^p$ to $L^q$ boundedness for $({1 \over p}, {1 \over q})$ in the interior of trapezoid with vertices $(0,0)$,  $({2 \over l + 1}, {1 \over l + 1})$, $({l \over l + 1}, {l - 1 \over l + 1})$, and $(1,1)$. In the case that $l = 2$ this reduces to the triangle with vertices $(0,0), ({2 \over 3}, {1 \over 3})$,
and $(1,1)$ and the second part of Theorem 1.4 gives $L^p$ to $L^q$ boundedness for  $({1 \over p}, {1 \over q})$ in the interior of triangle.
Theorem 1.5 then says that one does not have $L^p$ to $L^q$ boundedness for $({1 \over p}, {1 \over q})$ below the line containing the segment joining $({2 \over l + 1}, {1 \over l + 1})$ to $({l \over l + 1}, {l - 1 \over l + 1})$, namely the line $y = x - { 1\over l + 1}$. In fact, it follows from [S] that the trapezoid above is optimal
up to endpoints, but this requires an additional argument.

\noindent {\bf Example 2.} 

We move to the situation where $n \geq 2$ and consider the situation where each $\alpha_i = 0$, such as in the case of (nonsingular) Radon transforms, 
and where the order of each zero of each $S_F({\bf t})$ on $(\R - \{0\})^n$ is at most two. This includes the situation where the Newton polyhedron 
of $S$ is nondegenerate in the sense of Varchenko [V] and various other papers. Then as in the previous example, $\max(o(S),2) = 2$. By [V] the
quantity $a_0$ is given by ${1 \over d(S)}$, where $d(S)$ is the Newton distance of $S$ as in Definition 1.5. Since the stronger first part of Theorem 1.4 holds when $g < {1 \over 2}$ here, we focus our
attention on the situation where $d(S) > 2$ and therefore $g  = {1 \over d(S)} < {1 \over 2}$. Also, since each $\alpha_i = 0$, the 
quantity $k$ of Theorems 1.3 and 1.4 is just $1$.

In the situation at hand, $Z$ has vertices $({1 \over d(S)}, {1 \over d(S)},{1 \over d(S)})$, $(1 - {1 \over d(S)}, 1 - {1 \over d(S)}, {1 \over d(S)})$, 
and $(1,0,-1)$. The triangle $Z_1$ has vertices $(0,0,0)$, $(1,0,-1)$, and $({1 \over d(S)}, {1 \over d(S)},{1 \over d(S)})$, and the triangle $Z_2$ has 
vertices $(1,1,0)$, $(1,0,-1)$, and $(1 - {1 \over d(S)}, 1 - {1 \over d(S)}, {1 \over d(S)})$.  Theorem 1.4 gives 
$L^p$ to $L^q_{s}$ boundedness for $({1 \over p}, {1 \over q}, s)$ below the interior of $Z \cup Z_1 \cup Z_2$, and since each $\alpha_i = 0$, Theorem 1.5 says one cannot get 
$L^p$ to $L^q_{s}$ boundedness for $({1 \over p}, {1 \over q}, s)$ above the plane containing $Z$.

The intersection of $Z$ with the $x$-$y$ plane can be computed
to be the line segment joining $({2 \over d(S) + 1}, {1 \over d(S) + 1}, 0)$ and $({d(S)  \over d(S) + 1}, {d(S) - 1 \over d(S) + 1}, 0)$. Thus we have 
$L^p$ to $L^q$ boundedness  for $({1 \over p}, {1 \over q})$ in the interior of the trapezoid with vertices $(0,0), ({2 \over d(S) + 1}, {1 \over d(S) + 1})$, 
$({d(S)  \over d(S) + 1}, {d(S) - 1 \over d(S) + 1})$, and $(1,1)$. In the case where $d(S)$ is exactly two, similar to the previous example the second part
 of Theorem 1.4 gives $L^p$ to $L^q$ boundedness in the interior of the triangle with vertices $(0,0), ({2 \over 3}, {1 \over 3})$,
and $(1,1)$. Theorem 1.5 then says that one does not have $L^p$ to $L^q$ boundedness for $({1 \over p}, {1 \over q})$ below the line containing
the segment joining 
$({2 \over d(S) + 1}, {1 \over d(S) + 1})$ to $({d(S)  \over d(S) + 1}, {d(S) - 1 \over d(S) + 1})$, which is the line $y = x - { 1\over d(S) + 1}$.

\
\

\noindent {\bf Example 3.} 

Suppose now that each $l_i = 1$ so that each ${\bf t}_i$ is one dimensional. Like in the previous example we assume that
$o(S) \leq 2$. Then the quantities $a_0$ and $d_0$ of $(1.5)$ are 
defined by the condition that if $r$ is small enough there exist constants $b_r$ and $B_r$ such that for $0 < \epsilon < {1 \over 2}$ we have
$$b_r \epsilon^{a_0} |\ln \epsilon|^{d_0} < \int_{\{{\bf t} \in (0,r)^n:  S^*({\bf t}) < \epsilon\}} \prod_{i=1}^n t_i^{-\alpha_i}\,d{\bf t} < B_r \epsilon^{a_0} |\ln\epsilon|^{d_0}\eqno (3.1)$$
We change variables $t_i = u_i^{1 \over 1 - \alpha_i}$ in $(3.1)$, so that up to a constant, $t_i^{\alpha_i}\,dt_i = du_i$. Then $(3.1)$ becomes
$$b_r' \epsilon^{a_0} |\ln \epsilon|^{d_0} < m(\{{\bf u} \in (0,r)^n:  S^*(u_1^{1 \over 1 -  \alpha_1},...,u_n^{1 \over 1 - \alpha_n}) < \epsilon\})  < B_r' \epsilon^{a_0} |\ln\epsilon|^{d_0}\eqno (3.2)$$
Here $m$ denotes Lebesgue measure. Let $R(u_1,...,u_n) = S({\rm sgn}(u_1)|u_1|^{1 \over 1 - \alpha_1},..., {\rm sgn}(u_n)|u_n|^{1 \over 1 - \alpha_n})$.
Observe that  $(\beta_1,...,\beta_n) \rightarrow ({1 \over 1 - \alpha_1}\beta_1,..., {1 \over 1 - \alpha_n}\beta_n)$
takes the Newton polyhedron $N(S)$ to the Newton polyhedron $N(R)$ with faces getting mapped to corresponding faces.
Since $(u_1,...,u_n) \rightarrow (u_1^{1 \over 1 -  \alpha_1},...,u_n^{1 \over 1 - \alpha_n})$ is a diffeomorphism on $(0,r)^n$, the maximum order of
a zero of a given $S_F(t)$ on $(\R - \{0\})^n$ is the same as the maximum order of the corresponding $R_{\bar{F}}(t)$ on $(\R - \{0\})^n$ , where
$\bar{F}$ is the face of $N(R)$ corresponding to $F$. Hence $o(R) = o(S)$, which by our assumptions is at most $2$.

Next, observe that $R^*({\bf u}) = S^*(|u_1|^{1 \over 1 -  \alpha_1},...,|u_n|^{1 \over 1 - \alpha_n})$, where we define $R^*({\bf u})$ analogously to
$(1.4)$. Thus by an immediate modification of the argument for $S({\bf t})$, the quantity $a_0$ of $(3.2)$ is given by ${1 \over d(R)}$.  So the 
quantity $g$ of our theorems is given by  $g = \min({1 \over d(R)}, 1 - \alpha_1,...,1 - \alpha_n)$. 

Most of the time,
the quantity ${1 \over d(R)}$ will be smaller than each $1- \alpha_i$. For example, suppose each
 $\alpha_i = \alpha$ for
some $0 < \alpha < 1$. Then since the terms of $S^*({\bf u})$ have degree at least $2$, one has that $d(R) > C(1 - \alpha)^{-2}$ for some constant $C$.
Hence ${1 \over d(R)} < C'(1 - \alpha)^2$, so that if $\alpha$ is close enough to 1 then we have $g = \min({1 \over d(R)}, 1 - \alpha) = {1 \over d(R)}$.

Motivated by the above, we now add the assumption that $g = {1 \over d(R)}$. We also add the assumption $d(R) > 2$ so that we are in the 
$g < {1 \over 2}$ case where the results are strongest. Since we are also assuming that $o(R) = o(S) \leq 2$, we are in
the setting of the first parts of Theorems 1.3-1.4. The plane $P$  has equation $({1 \over d(R)} + 1 + \sum_{i=1}^n \alpha_i)(x - y) + z = 
{1 \over d(R)}$, and the triangles $Z$, $Z_1$, and $Z_2$ are determined as in example 2, if we replace $d(S)$ in that example by $d(R)$ and 
the vertex $(1,0,-1)$ by $(1,0, -k)$ where $k = 1 + \sum_{i=1}^n \alpha_i$. So 
$Z$ has vertices $({1 \over d(R)}, {1 \over d(R)},{1 \over d(R)})$, $(1 - {1 \over d(R)}, 1 - {1 \over d(R)}, {1 \over d(R)})$, 
and $(1,0,-k)$. The triangle $Z_1$ has vertices $(0,0,0)$, $(1,0,-k)$, and $({1 \over d(R)}, {1 \over d(R)},{1 \over d(R)})$, and the triangle $Z_2$ has 
vertices $(1,1,0)$, $(1,0,-k)$, and $(1 - {1 \over d(R)}, 1 - {1 \over d(R)}, {1 \over d(R)})$. 

Like in example 2, we have $L^p$ to $L^q_{s}$ boundedness for $({1 \over p}, {1 \over q}, s)$ below the interior of $Z \cup Z_1 \cup Z_2$. However unlike in example 2, if any $\alpha_i \neq 0$ the sharpness statement of Theorem 1.5 cannot be assumed to hold. However, the sharpness statements of [G1]  
for $L^p$ to $L^p_s$ boundedness tell us that one can not have an $L^p$ to
$L^p_s$ boundedness theorem when $s$ is above the line connecting the vertices $({1 \over d(R)}, {1 \over d(R)},{1 \over d(R)})$ and $(1 - {1 \over d(R)}, 1 - {1 \over d(R)}, {1 \over d(R)})$ of $Z$, namely the line $\{(t,t,{1 \over d(R)}): 0 < t < 1\}$. 

\section{The proof of Theorem 1.2}

The inclusion relations amongst Sobolev spaces imply that it suffices to show Theorem 1.2 for $1 + \sum_{i = 1}^m \alpha_i  < \gamma < n + 1$, 
so this is what we will assume.

The operator $T$ is a convolution operator taking $f$ to $f \ast \rho$ for some measure $\rho$. Then given any $\gamma$ satisfying
$1 + \sum_{i = 1}^m \alpha_i  < \gamma < n + 1$ , $(I - \Delta)^{-{\gamma \over 2}} Tf$ is given by $f \ast \sigma_{\gamma}$ where $\sigma_{\gamma}$
is the convolution of $\rho$ with the inverse Fourier transform of $(1 + |\xi|^2)^{-{\gamma \over 2}}$. Theorem 1.2 for such $\gamma$ will immediately follow from
Young's inequality once we prove the following lemma.

\begin{lemma}

For any $\gamma$ with $1 + \sum_{i = 1}^m \alpha_i  < \gamma < n + 1$, the measure $\sigma_{\gamma}$ is a function $h_{\gamma}(x)$ satisfying
\[ |h_{\gamma}(x)| \leq C_{\gamma}e^{-|x|} \tag{4.1}\]

\end{lemma}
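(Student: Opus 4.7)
The plan is as follows. The measure $\rho$ associated with $T$ is the pushforward of $K(\mathbf{t})\,d\mathbf{t}$ under $\mathbf{t}\mapsto (\mathbf{t},S(\mathbf{t}))$, so $\rho$ is compactly supported in a small neighborhood of the origin, and $\sigma_\gamma = \rho * G_\gamma$, where $G_\gamma$ denotes the Bessel kernel on $\R^{n+1}$, i.e.\ the inverse Fourier transform of $(1+|\xi|^2)^{-\gamma/2}$. I will use the standard pointwise bounds $G_\gamma(y) \leq C|y|^{\gamma-(n+1)}$ for $|y|\leq 1$ and $G_\gamma(y) \leq C e^{-|y|}$ for $|y|\geq 1$ (see Stein, \emph{Singular Integrals}, Chapter V).

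The exponential tail is the easy part. If $\mathrm{supp}\,\rho \subset \{|y|\leq r_0\}$ and $|x|\geq 2r_0$, then $|x-y|\geq |x|-r_0$ on $\mathrm{supp}\,\rho$, so
\[ |h_\gamma(x)| \leq Ce^{r_0}\, e^{-|x|} \int |K(\mathbf{t})|\,d\mathbf{t} \leq C'e^{-|x|}, \]
where the integral is finite because each $\alpha_k < l_k$. Since $e^{-|x|}$ is bounded below by a positive constant on $\{|x|\leq 2r_0\}$, it suffices to prove a uniform pointwise bound $|h_\gamma(x)|\leq C$ on this set.

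For this main bound, using $|x - (\mathbf{t},S(\mathbf{t}))| \geq |\mathbf{x}-\mathbf{t}|$ together with the singular estimate for $G_\gamma$, it suffices to show
\[ \int_V \prod_{k=1}^m |\mathbf{t}_k|^{-\alpha_k}\, |\mathbf{x}-\mathbf{t}|^{\gamma-(n+1)}\,d\mathbf{t} \leq C \]
uniformly in $\mathbf{x}$ on any bounded set. Since $\gamma > 1+\sum\alpha_k$, write $\gamma-(n+1) = \sum_{k=1}^m(\alpha_k - l_k + \epsilon/m)$ for some small $\epsilon>0$. For $\epsilon$ small each exponent $\alpha_k - l_k + \epsilon/m$ is negative, so the elementary inequality $|\mathbf{x}-\mathbf{t}|\geq |\mathbf{x}_k-\mathbf{t}_k|$ yields
\[ |\mathbf{x}-\mathbf{t}|^{\gamma-(n+1)} \leq \prod_{k=1}^m |\mathbf{x}_k-\mathbf{t}_k|^{-(l_k-\alpha_k) + \epsilon/m}. \]
Consequently the integral above factors into a product of integrals of the form
\[ \int_{|\mathbf{t}_k|\leq r} |\mathbf{t}_k|^{-\alpha_k}\, |\mathbf{x}_k-\mathbf{t}_k|^{-(l_k-\alpha_k)+\epsilon/m}\,d\mathbf{t}_k. \]

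Each such factor is uniformly bounded in $\mathbf{x}_k$ by a standard three-region decomposition of $\R^{l_k}$ into $\{|\mathbf{t}_k|\leq |\mathbf{x}_k|/2\}$, $\{|\mathbf{x}_k|/2 \leq |\mathbf{t}_k|\leq 2|\mathbf{x}_k|\}$, and $\{2|\mathbf{x}_k|\leq |\mathbf{t}_k|\leq r\}$: since the two singularity exponents sum to $l_k-\epsilon/m<l_k$, each piece contributes a bound of order $|\mathbf{x}_k|^{\epsilon/m}$ or a constant, and the cases $\alpha_k=0$ or $\alpha_k>0$ are handled uniformly. The main obstacle is precisely this balancing step: distributing the surplus $\epsilon = \gamma-1-\sum\alpha_k$ across the $m$ coordinate blocks so that, within each block, the combined singularity strength stays strictly below the local dimension $l_k$; once this distribution is arranged, the rest is routine bookkeeping.
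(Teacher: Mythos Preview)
Your approach is essentially the paper's: exponential tail of the Bessel kernel handles $|x|$ large, then drop $x_{n+1}$ via $|(\mathbf{x}-\mathbf{t},\,x_{n+1}-S(\mathbf{t}))|\geq |\mathbf{x}-\mathbf{t}|$, split the exponent $\gamma-(n+1)$ across the $m$ blocks, and bound each block convolution separately. The paper bounds each block factor by a Fourier-side argument (the product of the two Fourier transforms is integrable on $\R^{l_k}$ because $\alpha_k+\beta_k<l_k$), while you use a three-region split in physical space; both are standard and interchangeable here.

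One slip to fix: you write $\gamma-(n+1)=\sum_k(\alpha_k-l_k+\epsilon/m)$ and then say ``for $\epsilon$ small each exponent is negative,'' but $\epsilon=\gamma-1-\sum_k\alpha_k$ is \emph{determined} by $\gamma$, not a free small parameter. If $\gamma$ is near $n+1$ and the $l_k-\alpha_k$ are unbalanced, the equal share $\epsilon/m$ can exceed some $l_k-\alpha_k$, in which case the inequality $|\mathbf{x}-\mathbf{t}|^{\,\alpha_k-l_k+\epsilon/m}\leq |\mathbf{x}_k-\mathbf{t}_k|^{\,\alpha_k-l_k+\epsilon/m}$ goes the wrong way. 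The paper avoids this by choosing $\beta_k\in(0,\,l_k-\alpha_k)$ with $\sum_k\beta_k=n+1-\gamma$ \emph{non-uniformly}; since $n+1-\gamma<\sum_k(l_k-\alpha_k)$, such $\beta_k$ always exist (e.g.\ take $\beta_k$ proportional to $l_k-\alpha_k$). You already flag exactly this balancing in your final paragraph, so the correction is immediate: replace the uniform $\epsilon/m$ by such $\beta_k$ throughout.
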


\begin{proof} The inverse Fourier transform of $(1 + |\xi|^2)^{-{\gamma \over 2}}$ is the well known Bessel kernel $G_{\gamma}(x)$ which satisfies
 the following bounds for some $C_{\gamma}' > 0$.
\[|G_{\gamma}(x)| < C_{\gamma}'|x|^{-n - 1 + \gamma}\,\,\,\,\,\,\,(|x| < 1)\tag{4.2a}\]
\[|G_{\gamma}(x)| < C_{\gamma}'e^{-|x|}\,\,\,\,\,\,\,(|x| \geq  1)\tag{4.2b}\]
We refer to [AS] for more information about such estimates. Thus $h_{\gamma}(x) = \rho \ast G_{\gamma}(x)$, and we will show that $(4.1)$ is satisfied
using $(4.2a)-(4.2b)$.

First, note that since $\rho$ is a finite measure which we may assume is supported on $\{x: |x| < 1\}$, $(4.2b)$ implies that if 
$|x| > 2$ then $h_{\gamma}(x) = \rho \ast G_{\gamma}(x)$ satisfies $(4.1)$. Thus it suffices to consider showing 
$(4.1)$ is satisfied for $|x| < 2$. In other words, we must show that $h_{\gamma}(x)$ is bounded on $|x| < 2$. We will actually end out proving 
$h_{\gamma}(x)$ is bounded on all of $\R^{n+1}$.

Write $G_{\gamma}(x) = G_{\gamma}^1(x) + G_{\gamma}^2(x)$ where $G_{\gamma}^1(x) = 
G_{\gamma}(x)\chi_{\{x: |x| < 1\}}(x)$ and
$G_{\gamma}^2(x) = G_{\gamma}(x)\chi_{\{x: |x| \geq 1\}}(x)$. Note that by $(4.2a)$ the function $G_{\gamma}(x)$ is bounded on 
 $\{x: |x| \geq 1\}$. Hence $\rho \ast G_{\gamma}^2(x)$ is the convolution of a finite measure with a bounded function, and thus
is a bounded function. Hence to show Lemma 4.1 it suffices to show that $\rho \ast G_{\gamma}^1(x)$ is a bounded function. If as before ${\bf x}$ denotes
 $(x_1,...,x_n)$ and ${\bf t}$ denotes $(t_1,...,t_n)$, we have
\[\rho \ast G_{\gamma}^1(x) = \int_{\R^n} G_{\gamma}^1({\bf x} - {\bf t}, x_{n+1} - S({\bf t})) K({\bf t})\,d{\bf t}
\tag{4.3}\]
By $(4.2a)$ one has $|G_{\gamma}(x)| < C_{\gamma}'|x|^{-n - 1+ \gamma}$ for $|x| < 1$. Let $\psi(x)$ be a bump function on $\R$ that is nonnegative, even, decreasing on $x \geq 0$, equal to $1$ on $(-1,1)$ and
supported on $(-2,2)$. Then $|G_{\gamma}^1(x)| \leq C_{\gamma}'|x|^{-n - 1+ \gamma}\psi(|x|)$. 
Furthermore, by $(1.2a)$ one has $|K({\bf t})| \leq C\prod_{k=1}^m |{\bf t}_k|^{-\alpha_k}$. Substituting these bounds in $(4.3)$ gives the
following, where $X$ denotes the support of $K({\bf t})$.
 \[|\rho \ast G_{\gamma}^1(x)| \leq \int_X\psi({|(\bf x} - {\bf t}, x_{n+1} - S({\bf t}))|) \times |({\bf x} - {\bf t}, x_{n+1} - S({\bf t}))|^{-n - 1 + \gamma} \prod_{k=1}^m |{\bf t}_k|^{-\alpha_k}\,d{\bf t}
\tag{4.4}\]
Since $|x|^{-n  - 1 + \gamma}\psi(|x|)$ is decreasing in $|x|$, the integrand in $(4.4)$ is increased if we replace $x_{n+1} - S({\bf t})$ by
$0$. Hence we have
\[|\rho \ast G_{\gamma}^1(x)| \leq \int_X\psi(|{\bf x} - {\bf t}|)|{\bf x} - {\bf t}|^{-n - 1 + \gamma} \prod_{k=1}^m 
|{\bf t}_k|^{-\alpha_k}\,d{\bf t}\tag{4.5}\]
Since $X$ is compact, there are constants $B_k$ such that the right-hand side of $(4.5)$ is bounded by
\[\int_{\R^n} \psi(|{\bf x} - {\bf t}|)|{\bf x} - {\bf t}|^{-n - 1 + \gamma} \prod_{k=1}^m \psi(B_k |{\bf t}_k|)|{\bf t}_k|^{-\alpha_k}\,d{\bf t}\tag{4.6}\]
The assumed condition that $\gamma > 1 + \sum_{i =1}^m \alpha_i$ implies that $-n - 1 + \gamma > -n +  \sum_{i =1}^m \alpha_i = 
\sum_{i=1}^m (\alpha_i - l_i) = \sum_{i=1}^m -(l_i -\alpha_i)$. The assumed condition that $\gamma < n + 1$ simply means that $-n - 1 + \gamma < 0$.
Hence we may write $-n - 1 + \gamma = \sum_{k=1}^m -\beta_k$ where each $\beta_k$ satisfies $0 < \beta_k < (l_k - \alpha_k)$. We rewrite $(4.6)$ as
\[\int_{\R^n} \psi(|{\bf x} - {\bf t}|)\bigg( \prod_{k=1}^m |{\bf x} - {\bf t}|^{-\beta_k}\bigg)\psi(B_k |{\bf t}_k|)|{\bf t}_k|^{-\alpha_k}\,d{\bf t}\tag{4.7}\]
Since the functions $|y|^{-{\beta_k}}$ are decreasing in $y$,  this is bounded by
\[\int_{\R^n} \psi(|{\bf x} - {\bf t}|) \bigg(\prod_{k=1}^m |{\bf x}_k - {\bf t}_k|^{-\beta_k}\bigg)\psi(B_k |{\bf t}_k|)|{\bf t}_k|^{-\alpha_k}\,d{\bf t}\tag{4.8}\]
Furthermore, there are bump functions $\psi_k$ such that
\[\psi(|{\bf x} - {\bf t}|) \leq \prod_{k =1}^m \psi_k(|{\bf x}_k - {\bf t}_k|)\tag{4.9}\]
Inserting this into $(4.8)$ provides an upper bound of
\[\prod_{k=1}^m \int_{\R^{l_i}} \psi_k(|{\bf x}_k - {\bf t}_k|)|{\bf x}_k - {\bf t}_k|^{-\beta_k}\psi(B_k |{\bf t}_k|)|{\bf t}_k|^{-\alpha_k}\,d{\bf t}_k\tag{4.10}\]
I claim that since $0 < \alpha_k + \beta_k < l_k $, each integral in the product $(4.10)$ is uniformly bounded in $x_k$. One way to see this is to view the 
integral as a convolution of two functions, one of whose Fourier transforms is bounded by $C(1 + |\xi|)^{-(l_k  - \beta_k)}$ an the other whose Fourier
transform is bounded by  $C(1 + |\xi|)^{-(l_k  - \alpha_k)}$. Hence the Fourier transform of the convolution is bounded by $C'(1 + |\xi|)^{-(2l_k  -
\alpha_k -  \beta_k)}$. Since $\alpha_k + \beta_k < l_k $, the exponent $2l_k - \alpha_k - \beta_k$ is greater than $l_k$, which means the Fourier transform
of the convolution is integrable. Hence the inverse Fourier transform of this Fourier transform is uniformly bounded. In other words, each integral 
in $(4.10)$ is uniformly bounded. Hence looking back to $(4.4)$ we see that $|\rho \ast G_{\gamma}^1(x)|$ is uniformly bounded in $x$, completing
the proof of Lemma 4.1.

\end{proof}

\section{The proof of Theorem 1.5}

\noindent {\bf Motivation.}

We now assume that the hypotheses of Theorem 1.5 are satisfied. Since we are assuming each $\alpha_i = 0$, by definition of $g$ we have
that $g = \min(a_0,l_1,...,l_m)$. Since each $l_i$ is a positive integer and we are assuming that $g \leq {1 \over \max(o(S),2)} < 1$, we must have 
$a_0 = g$. 

It suffices to prove Theorem 1.5 in the case where $s > 0$. To see why, suppose we know Theorem 1.5 for $s > 0$,  and $s \leq 0$ is such that we have
 boundedness theorem for some $({1 \over p}, {1 \over q}, s)$ above
the plane $P$. Then we get a contradiction by interpolating this result with a boundedness theorem for an $({1 \over p}, {1 \over q}, s)$ provided by 
Theorem 1.4 with $s > 0$ 
that is a small distance beneath the plane $P$; the result is a boundedness theorem for $({1 \over p}, {1 \over q}, s)$ above the plane $P$
but with $s > 0$, contradicting Theorem 1.5 for $s > 0$. So in the following argument we can always assume $s > 0$.

We will prove Theorem 1.5 by testing $T$ on approximations to characteristic functions of rectangular boxes defined as follows. Let $d$ be the Newton distance of $S$, which we recall is
given by the minimal $t$ for which $(t,...,t)$ is in the Newton polyhedron $N(S)$. Let $b = (b_1,...,b_n)$ be a vector of nonnegative
numbers for which the infimum of $b \cdot \alpha$ over all vertices $\alpha$ of $N(S)$ is given by $b \cdot (d,...,d) = d\sum_{i=1}^n b_i$. In other
words, we let $b$ 
be such that there is a supporting hyperplane of $N(S)$ with normal $b$ containing $(d,...,d)$. The boxes we will use to test $T$ will have dimensions
comparable to $r^{b_1} \times ... \times r^{b_n} \times r^{d\sum_{i=1}^n b_i}$ and we will let $r \rightarrow 0$. Here if some $b_i = 0$ then we replace $r^{b_i}$ with a constant dimension $c_i$ that is
stipulated to be sufficiently small for our arguments to work.

To help understand the significance of such rectangular boxes, we go back to $(1.5)$ and examine $S^*({\bf t})$ on the box $({1 \over 2}r^{b_1},r^{b_1}) 
\times ...\times ({1 \over 2} r^{b_n}, r^{b_n})$. Then $S^*({\bf t})$ is 
comparable in magnitude to  the largest term $r^{\alpha \cdot b}$ for $\alpha \in N(S)$, which in turn is given by $r^{(d,...,d) \cdot b} = 
r^{d \sum_{i=1}^n b_i}$. On the other hand the volume of the box is comparable to $r^{\sum_{i=1}^n b_i}$. Thus if $\epsilon > 0$ is such that
$S^*({\bf t}) \sim \epsilon$ on the box, then $r \sim \epsilon^{1 \over d \sum_{i=1}^n b_i}$ and the volume of the box is comparable to
 $r^{\sum_{i=1}^n b_i} \sim \epsilon^{1 \over d}$.
As mentioned earlier, by Varchenko's [V] and other papers, $a_0 = {1 \over d}$. Hence in terms of $(1.5)$, for any $\epsilon > 0$ the associated box
contains a large chunk of the points where $S^*({\bf t}) < \epsilon$, in the sense that we are off at most by a constant times a logarithmic factor.

By the nature of the arguments used to prove the $L^p$ to $L^p_{s}$ estimates, the above considerations imply that the boxes of the previous
paragraph are natural for testing $L^p$ to $L^p_{s}$ estimates. On the other hand, one has a lot of flexibility in testing the 
$L^{1 + \epsilon}$ to $L^{q}_{-\gamma - \epsilon}$ estimates for $\epsilon \rightarrow 0$ and $q \rightarrow \infty$, and effectively one can
interpolate between the two situations so that the boxes can also be used to show that one can never get an $L^p$ to $L^q_{s}$ 
estimate above the plane $P$ in Theorems 1.3-1.4.

\noindent {\bf The main argument.}

Let $\psi(x)$ be a nonnegative bump function on $\R$ supported on $(-2,2)$ such that $\psi(x) \leq 1$ with $\psi(x) = 1$ on $(-1,1)$. Let $\psi_1(x)$
be a nonzero Schwartz function on $\R$ whose Fourier transform is supported in $(1,2)$. Let $b_{n+1} = d\sum_{i=1}^n b_i$ and for $r, N> 0$
 let $f_{r,N}(x)$ be defined by
\[f_{r,N}(x) = \psi_1(r^{-b_{n+1}}x_{n+1})\prod_{i = 1}^n \psi(N r^{-b_i} x_i) \tag{5.1}\]
 As above we replace $r^{-b_i} x_i$ by $c_i^{-1}x_i$ for an appropriately small constant $c_i$ in the event that
$b_i = 0$.

Let $D^s$ denote the operator with Fourier multiplier $|\xi_{n+1}|^s$.
We will show that given $p$, $q$, $s$ with $1 < p, q < \infty$ such that $({1 \over p}, {1 \over q}, t)$ is on the plane $P$ of Theorem 1.5 for some 
$t < s$, then if $N$ is 
sufficiently large  we have the following estimate for some $\epsilon > 0$, for all sufficiently small $r > 0$.
\[||D^s (T f_{r,N})||_{L^q} / ||f_{r,N}||_{L^p} > C r^{-\epsilon} \tag{5.2}\]
 I claim that this suffices to prove that one cannot have an estimate of the form $||T f_{r,N}||_{L^q_s} \leq C||f_{r,N}||_{L^p}$. For if 
we did have such an estimate, we could compose it with the operator with multiplier of the form $\phi(r^{b_{n+1}}\xi_{n+1})
|\xi_{n+1}|^s / (1 + |\xi|^2)^{s \over 2}$, where $\phi$ is a bump function supported on $(1/2,3)$ and equal to 1 on $(1,2)$. This multiplier is uniformly bounded in $r$ on $L^q$
by the Marcinkiewicz multiplier theorem (see p. 108 of [Ste]). Since the Fourier transform of ${\psi_1}$ is supported on $(1,2)$, this composition
 acting on $T f_{r,N}$ is just  $D^s T f_{r,N}$ and we 
obtain that $||D^s T f_{r,N}||_{L^q} \leq C'||f_{r,N}||_{L^p}$. This contradicts $(5.2)$ as
$r \rightarrow 0$. Hence it will suffice to prove $(5.2)$.

Next, by the translation invariance of $T$ we have $D^s (T f_{r,N}) = T(D^s f_{r,N})$ and we examine the effect of $D^s$ on $f_{r,N}$. Since $\psi_1$
has Fourier transform supported on the interval $(1,2)$, $D^s f_{r,N}(x)$ is of the form
\[D^s f_{r,N}(x) = r^{-sb_{n+1}}\Psi_s(r^{-b_{n+1}}x_{n+1})\prod_{i = 1}^n \psi(N r^{-b_i} x_i) \tag{5.3}\]
Here $\Psi_s (x_{n+1})$ is of the same form as $\psi_1(x_{n+1})$, but has been modified due to the multiplier. Let $p$ be such that $\Psi_s(p) \neq 0$.
Then there are some $\epsilon_0, \delta_0 > 0$ such that $|\Psi_s(p')|> \epsilon_0$ and $|\arg(\Psi_s(p)) - \arg(\Psi_s(p'))| < {\pi \over 4}$ 
when $|p' - p| < 2\delta_0$.

We examine $D^s(T f_{r,N})(x) =
T(D^s f_{r,N} (x))$ for a fixed $x$ such that $|x_{n+1} - r^{b_{n+1}}p| < \delta_0 r^{b_{n+1}}$ and $|x_i| < {1 \over 2N}r^{b_i}$ for each $i \leq n$.
Observe that $T(D^s f_{r,N}(x))$ is the average of $D^s f_{r,N}(x')$ in $x'$ over a surface centered at $x$, which by the assumptions of Theorem 1.5 is
weighted by a nonnegative function which is bounded below by some $C_1 > 0$  near $x$ . Since $|x_i| < {1 \over 2N}r^{b_i}$ for 
each $i \leq n$, the portion of the average corresponding to $x'$ on the surface with $|x_i' - x_i| < {1 \over 2N}r^{b_i}$ for $1 \leq i \leq n$ will be such that  
the $\psi(N r^{-b_i} x_i')$ factors in $(5.3)$ will all be $1$. 
 
Next, by Lemma 2.1 of [G2] there is a constant $C_0$ such that $|S({\bf t}) | \leq C_0|S^*({\bf t})|$ for all $t$ in a sufficiently small neighborhood of the origin.
 Thus when $|t_i| < {1 \over 2N}r^{b_i}$ for each $i$, one has 
\[|S({\bf t}) | \leq C_0|S^*({\bf t})|\]
\[ \leq C_0 \sup_{\{t: |t_i|< {1 \over 2N}r^{b_i}{\rm\,\,for\,\,all\,\,}i \}} |S^*({\bf t})|\]
\[ \leq C_0 {1 \over 2N}\sup_{\{t: |t_i|< r^{b_i}{\rm\,\,for\,\,all\,\,}i\}} |S^*({\bf t})| \tag{5.4}\]
As described in the motivation section above, $\sup_{\{t: |t_i|< r^{b_i}{\rm\,\,for\,\,all\,\,}i\}} |S^*({\bf t})| \leq C_1r^{d\sum_{i=1}^n b_i}
 = C_1r^{b_{n+1}}$.
As a result, if $N$ is large enough, we can ensure that if $|t_i| < {1 \over 2N}r^{b_i}$ for each $i$ then we have
\[|S({\bf t})| \leq \delta_0 r^{b_{n+1}}\tag{5.5}\]
 Thus when averaging $D^s f_{r,N}(x')$ over $x'$ on the surface centered at $x$, the portion corresponding to 
where $|x_i' - x_i| < {1 \over 2N}r^{b_i}$ for all $i$ will have always have its final coordinate satisfying $|x_{n+1}' - x_{n+1}| < \delta_0r^{b_{n+1}}$,
so that $|x_{n+1}'- r^{b_{n+1}}p| \leq |x_{n+1}' - x_{n+1}| + |x_{n+1} -  r^{b_{n+1}}p| < 2\delta_0r^{b_{n+1}}$.
Hence by definition of $\delta_0$, $|\Psi_s(r^{-b_{n+1}}x_{n+1}')| > \epsilon_0$ and the argument of $\Psi_s(r^{-b_{n+1}}x_{n+1}')$ is within ${\pi \over 4}$ of that of
$\Psi_s(r^{-b_{n+1}}x_{n+1})$.

Therefore, when viewing $D^s(T f_{r,N})(x) = T(D^s f_{r,N}(x))$ as the average of $D^s f_{r,N}(x')$ along a surface centered at $x$, the portion where 
$|x_i' - x_i| < {1 \over 2N}r^{b_i}$
for each $i$ corresponds to points where the $\psi(N r^{-b_i} x_i')$ factors in $(5.3)$ are all $1$, where $|\Psi_s(r^{-b_{n+1}}x_{n+1}')| > \epsilon_0$,
and where the argument of $\Psi_s(r^{-b_{n+1}}x_{n+1}')$ is within ${\pi \over 4}$ of that of $\Psi_s(r^{-b_{n+1}}x_{n+1})$.
As a result, we have $|D^s f_{r,N}(x')| > \epsilon_0 r^{-sb_{n+1}}$ at such points. 
 
Hence when averaging $D^s f_{r,N}(x')$ over the $x'$ with $|x_i' - x_i| < {1 \over 2N}r^{b_i}$ for all $1 \leq i \leq n$, one obtains a contribution to this
 average of absolute value at least
$({1 \over N})^n{\epsilon_0 \over 2}  C_1 r^{(\sum_{i=1}^n b_i) - sb_{n+1}}$ coming from these points. While it is true that there is also a contribution from other $x'$ where $|x_i' - x_i| < {5 \over 2N}r^{b_i}$
for each $i$, if $N$ is large enough we will still have $|\Psi_s(r^{-b_{n+1}}x_{n+1}')| > \epsilon_0$ and $|\arg(\Psi_s(r^{-b_{n+1}}x_{n+1}'))
- \arg(\Psi_s(r^{-b_{n+1}}x_{n+1}))| < {\pi \over 4}$, so this contribution 
will only amplify the previous contribution. In summary, if $N$ is large enough, there is a constant $\epsilon_1 > 0$ (which can depend on $s$ and $N$) 
such that  if 
 $|x_{n+1} - r^{b_{n+1}}p| < \delta_0 r^{b_{n+1}}$ and $|x_i| < {1 \over 2N}r^{b_i}$ for each $i \leq n$  then
\[|D^s(T f_{r,N})(x)| > \epsilon_1  r^{(\sum_{i=1}^n b_i) -sb_{n+1}}\tag{5.6}\]
The $L^q$ norm of $D^s(T f_{r,N})(x)$ is at least the $L^q$ norm of $D^s(T f_{r,N})(x)$ as a function on the set of points where  $|x_{n+1} - r^{b_{n+1}}p| < \delta_0 r^{b_{n+1}}$ and $|x_i| < {1 \over 2N}r^{b_i}$ for each $i \leq n$, so we have
\[||D^s(T f_{r,N})||_q > \epsilon_2 r^{(\sum_{i=1}^n b_i) -sb_{n+1}} \times r^{\sum_{i=1}^{n+1} b_i \over q} \tag{5.7a}\]
Recalling that $b_{n+1} = d\sum_{i = 1}^n b_i$, where $d$ is the Newton distance of $S$, $(5.7a)$ can be rewritten as 
\[||D^s(T f_{r,N})||_q > \epsilon_2 r^{(\sum_{i=1}^n b_i)(1 - sd + {1 + d\over q})}\tag{5.7b}\]
On the other hand, the $L^p$ norm of $f_{r,N}(x)$ satisfies
\[||f_{r,N}||_p \sim r^{\sum_{i=1}^{n+1} b_i \over p} = r^{(\sum_{i=1}^{n} b_i){(1 + d)\over p}} \tag{5.8}\]
Thus we have
\[||D^s(T f_{r,N})||_q/ ||f_{r,N}||_p > \epsilon_3 r^{(\sum_{i=1}^{n}b_i)(1 - sd + {d+1 \over q} - {d + 1 \over p})}\tag{5.9}\]
The exponent here is negative exactly when $s > {1 \over d} + {d + 1 \over d}{1 \over q} - {d + 1 \over d}{1 \over p}$. Confirming that this is in fact does
correspond to the
equation of the plane $P$, we observe that since each $\alpha_i = 0$ we have $k = 1$ in the definition of $P$. Furthermore, as described in the 
beginning of the motivation section above, we have $a_0 = g$, and as mentioned before, by [V] or [G2] 
we also have $a_0 = {1 \over d}$. So $g = {1 \over d}$ here.
Hence the plane $z = {1 \over d} + {d + 1 \over d}y  - {d + 1 \over d}x$ is the same as the plane $(g + k)(x - y) + z = g$. Therefore the condition
 $s > {1 \over d} + {d + 1 \over d}{1 \over q} - {d + 1 \over d}{1 \over p}$ is equivalent to $({1 \over p}, {1 \over q}, s)$ lying above $P$ as needed.
 This completes the proof of Theorem 1.5. \qed

\section {References.}

\noindent [AS] N. Aronszajn, K. T. Smith, {\it Theory of Bessel potentials I}, Ann. Inst. Fourier {\bf 11} (1961) 385-475. \parskip = 4pt\baselineskip = 3pt

\noindent [Cu] S. Cuccagna, {\it  Sobolev estimates for fractional and singular Radon transforms}, J. Funct. Anal. {\bf 139} (1996), no. 1, 94-118.

\noindent [DZ] S. Dendrinos, E. Zimmermann, {\it On $L^p$-improving for averages associated to mixed homogeneous polynomial hypersurfaces in $\R^3$},
 J. Anal. Math. {\bf 138} (2019), no. 2, 563-595. 

\noindent [FGU1] E. Ferreyra, T. Godoy, M. Urciuolo, {\it Boundedness properties of some convolution operators with singular measures}, Math. Z. 
{\bf 225} (1997), no. 4, 611-624.

\noindent [FGU2] E. Ferreyra, T. Godoy, M. Urciuolo, {\it Sharp $L^p$-$L^q$ estimates for singular fractional integral operators.}
Math. Scand. {\bf 84} (1999), no. 2, 213-230. 

\noindent [G1] M. Greenblatt, {\it $L^p$ Sobolev regularity of averaging operators over hypersurfaces and the Newton polyhedron}, J. Funct. Anal. 
{\bf 276} (2019), no. 5, 1510-1527.

\noindent [G2] M. Greenblatt, {\it Oscillatory integral decay, sublevel set growth, and the Newton 
polyhedron}, Math. Annalen {\bf 346} (2010) no. 4, 857-890. 

\noindent [G3] M. Greenblatt, {\it Smooth and singular maximal averages over 2D hypersurfaces  and associated Radon transforms}, submitted.

\noindent [G4] M. Greenblatt, {\it Uniform bounds for Fourier transforms of surface measures in $\R^3$ with nonsmooth density},
Trans. Amer. Math. Soc. {\bf 368} (2016), no. 9, 6601-6625.

\noindent [G5] M. Greenblatt, {\it An analogue to a theorem of Fefferman and Phong for averaging operators
along curves with singular fractional integral kernel}, Geom. Funct. Anal. {\bf 17} (2007), no. 4, 1106-1138.

\noindent [Gr] P. T. Gressman, {\it Uniform Sublevel Radon-like Inequalities}, J. Geom. Anal. {\bf 23} (2013), no. 2,
611-652.

\noindent [ISaS] A. Iosevich, E. Sawyer, and A. Seeger, {\it On averaging operators associated with convex hypersurfaces of finite type}, 
J. Anal. Math. {\bf 79} (1999), 159-187.

\noindent [Ka1] V. N. Karpushkin, {\it A theorem concerning uniform estimates of oscillatory integrals when
the phase is a function of two variables}, J. Soviet Math. {\bf 35} (1986), 2809-2826.

\noindent [Ka2] V. N. Karpushkin, {\it Uniform estimates of oscillatory integrals with parabolic or 
hyperbolic phases}, J. Soviet Math. {\bf 33} (1986), 1159-1188.

\noindent [O] D. M. Oberlin, {\it Convolution with measures on hypersurfaces}, Math. Proc. Camb. Phil. Soc.
{\bf 129} (2000), no. 3, 517-526.

\noindent [S] A. Seeger, {\it Radon transforms and finite type conditions}, J. Amer. Math. Soc. {\bf 11} (1998), no. 4, 869-897.

\noindent [St] B. Street, {\it Sobolev spaces associated to singular and fractional Radon transforms}, Rev. Mat. Iberoam. {\bf 33} (2017), no. 2, 633-748.

\noindent [Ste] E. M. Stein, {\it Singular integrals and differentiability properties of functions}, Princeton Mathematical Series, No. 30, Princeton University Press, Princeton, N.J., 1970.

\noindent [V] A. N. Varchenko, {\it Newton polyhedra and estimates of oscillatory integrals}, Functional 
Anal. Appl. {\bf 18} (1976), no. 3, 175-196.

\end{document}